\newcommand{\pare}[1]{\left( #1 \right)}
\newcommand{\av}[1]{\left| #1 \right|}
\newcommand{\bra}[1]{\left[ #1 \right]}
\newcommand{\set}[1]{\left\{ #1 \right\}}
\newcommand{\dd}{\textnormal{d}}
\newcommand{\cH}{\mathcal{H}}
\newcommand{\bR}{\mathbb{R}}
\newcommand{\pax}{\partial_x}
\def\comm#1#2{{\left\llbracket#1,#2\right\rrbracket}}
\newtheorem{satz}{Proposition}[section]
\newtheorem{lemma}[satz]{Lemma} 
\theoremstyle{definition}
\title{Models for damped water waves}
\author[R. Granero-Belinch\'{o}n]{Rafael Granero-Belinch\'{o}n}
\email{rafael.granero@unican.es}
\address{Departamento  de  Matem\'aticas,  Estad\'istica  y  Computaci\'on,  Universidad  de Cantabria.  Avda.  Los  Castros  s/n,  Santander,  Spain.}
\author[S. Scrobogna]{Stefano Scrobogna}
\email{sscrobogna@bcamath.org}
\address{Basque Center for Applied Mathematics, Mazarredo 14, 48009, Bilbao, Basque Country, Spain}
\begin{document}
\begin{abstract}
In this paper we derive some new weakly nonlinear asymptotic models describing viscous waves in deep water with or without surface tension effects. These asymptotic models take into account several different  dissipative effects and are obtained from the free boundary problems formulated in the works of Dias, Dyachenko and Zakharov (\emph{Physics Letters A, 2008}), Jiang, Ting, Perlin and Schultz (\emph{Journal of Fluid Mechanics,1996}) and Wu, Liu and Yue (\emph{Journal of Fluid Mechanics, 2006}). 
\end{abstract}

\keywords{Water waves, damping, moving interfaces, free-boundary problems}


\maketitle
{\small
\tableofcontents}

\allowdisplaybreaks
\section{Introduction}
The motion of a free boundary irrotational and incompressible flow is a classical research topic \cite{Stokes_1847}. In most applications, the flow is also assumed to be inviscid \cite{Lannes13, Coutand-Shkoller:well-posedness-free-surface-incompressible}. 

However, even if in most situations in coastal engineering the assumption of inviscid flow leads to very accurate results, there are other physical scenarios where the viscosity needs to be taken into account. Moreover, there are many situations in which the viscosity is very large and the vorticity is small and  its effect negligible. Actually, certain discrepancies between experiments and inviscid theory have been previously reported in the literature. For instance, Wu \cite{wu1981long} found that

\begin{quote}\emph{From  this comparison the theory appears quite satisfactory in predicting the wave phases during the inward focusing and the subsequent reflection within a radial distance as far as $r = 20$, while the peak amplitudes observed in the experiments are slightly smaller than those predicted by the  theory. This  discrepancy can be ascribed to the neglect of the viscous effects in the theory and to the approximation that the initial  wave generated in the tank was not cylindrical in shape and departed slightly from a perfect solitary wave profile in the experiment.}
\end{quote}

In addition to this, Zabusky and Galvin \cite{zabusky1971shallow} wrote 

\begin{quote}\emph{A laboratory-data/numerical- solution comparison of the number of crests and troughs and their phases (or relative  locations within a period) shows only negligible  difference. As one expects, the crest-to-trough amplitudes differ somewhat more because they are more sensitive to dissipative forces. To quantify some of the details we recommend a study including dissipation.}
\end{quote}

and, furthermore, Longuet-Higgins \cite{longuet1992theory} stated that

\begin{quote}
\emph{For certain applications, however, viscous damping of the waves is important, and it would be highly convenient to have equations and boundary conditions of comparable simplicity as for undamped waves.}
\end{quote}

The purpose of this paper is to derive new weakly nonlinear asymptotic models (in the spirit of \cite{granero2018asymptotic, granero2019asymptotic, GS, CGSW18, matsuno1992nonlinear, matsuno1993two, matsuno1993nonlinear, ramani2019multiscale}) describing damped water waves and, at the same time, keeping the features of potential flows. We observe that, at fist sight, the idea of viscous damping of potential flows is somehow paradoxical since the hypothesis of irrotational velocity implies that the viscous term in the Navier-Stokes equations vanishes.

The problem of describing the motion of a irrotational, incompressible, inviscid and homogeneous fluid with a free surface in two dimensions is known as the 2D water waves problem. The equations for the water waves problem are \cite{zakharov1968stability}

\begin{subequations}\label{eq:1}
\begin{align}
\Delta \phi&=0&&\text{ in }\Omega(t)\times[0,T],\\
\rho\left(\phi_t+\frac{1}{2}|\nabla\phi|^2+ Gh\right)-\gamma\mathcal{K}&=0&&\text{ on }\Gamma(t)\times[0,T],\\
h_t&=\nabla\phi\cdot \left(1+\left(\partial_{1}h\right)^2\right)^{1/2}n&&\text{ on }\Gamma(t)\times[0,T],
\end{align}
\end{subequations}
where $G$ stands for the gravity force,
\begin{align}\label{Omegat}
\Omega(t) & = \set{ (x_1, x_2)\in\bR^2 \ \Big| \ {-L}\pi <x_1<{L}\pi\,, -\infty < x_2 < h(x_1,t)\,, \ t\in[0,T] }, \\
\label{Gammat}
\Gamma(t) & = \set{ \pare{x_1,h(x_1,t)}\in\bR^2 \ \Big| \ x_1\in\mathbb{S}^1\,,\ t\in[0,T] }
\end{align} 
are the the region occupied by the fluid and the surface wave, respectively. We write 
$$
n=(-\partial_1 h,1)/\left(1+\left(\partial_{x_1}h\right)^2\right)^{1/2}
$$ 
the unit normal to the surface wave, $2L\pi$ to denote the characteristic wavelength of the surface wave, $\phi$ for the scalar potential of the flow, \emph{i.e.} the velocity field $u$ satisfies $u=\nabla\phi$, $\rho$ is the density of the fluid, $\gamma$ for the surface tension coefficient and 
$$
\mathcal{K}=\frac{\partial_{1}^2h}{\left(1+\left(\partial_{1}h\right)^2\right)^{3/2}},
$$
is the curvature of the surface wave.

The first attempts to include viscosity effects go back as far as to the works of Boussinesq \cite{boussinesq1895lois} and Lamb \cite{lamb1932hydrodynamics}. Later on, Ruvinsky \& Freidman \cite{ruvinsky1987fine} formulated a system of equations for weakly damped surfaces waves in deep water and used this system to compute capillary-gravity ripples riding on the forward face of steep gravity waves (see also \cite{ruvinsky1985improvement}). Then, these first results were generalized by Ruvinsky, Feldstein \& Freidman \cite{ruvinsky1991numerical} and the following system is proposed
\begin{subequations}\label{eq:2}
\begin{align}
\Delta \phi&=0&& \text{ in }\Omega(t)\times[0,T],\\
\rho\left(\phi_t+\frac{1}{2}|\nabla\phi|^2+ Gh\right)-\gamma\mathcal{K}&=-2\mu \partial_2^2\phi &&\text{ on }\Gamma(t)\times[0,T],\\
h_t&=\nabla\phi\cdot \left(1+\left(\partial_{1}h\right)^2\right)^{1/2}n+v &&\text{ on }\Gamma(t)\times[0,T],\\
v_t&=\partial_{1}^2\partial_2\phi &&\text{ on }\Gamma(t)\times[0,T],
\end{align}
\end{subequations}
where $v$ and $\mu$ denote the vertical component of the vortex part of fluid velocity and the dynamic viscosity. Equation \eqref{eq:2} was also studied by Kharif, Skandrani \& Poitevin \cite{khariff1996frequency}.

Using a clever change of variables, Longuet-Higgins \cite{longuet1992theory} simplified the previous system and obtained
\begin{subequations}\label{eq:3}
\begin{align}
\Delta \phi&=0&&\text{ in }\Omega(t)\times[0,T],\\
\rho\left(\phi_t+\frac{1}{2}|\nabla\phi|^2+ Gh\right)-\gamma\mathcal{K}&=-4\mu \partial_n^2\phi&&\text{ on }\Gamma(t)\times[0,T],\\
h_t&=\nabla\phi\cdot \left(1+\left(\partial_{1}h\right)^2\right)^{1/2}n&&\text{ on }\Gamma(t)\times[0,T],
\end{align}
\end{subequations}

A similar model was also studied by Jiang, Ting, Perlin \& Schultz \cite{jiang1996moderate} and Wu, Liu \& Yue \cite{wu2006note}, namely
\begin{subequations}\label{eq:3b}
\begin{align}
\Delta \phi&=0&&\text{ in }\Omega(t)\times[0,T],\\
\rho\left(\phi_t+\frac{1}{2}|\nabla\phi|^2+ Gh\right)-\gamma\mathcal{K}&=-\delta \mathscr{D}^s \phi&&\text{ on }\Gamma(t)\times[0,T],\\
h_t&=\nabla\phi\cdot \left(1+\left(\partial_{1}h\right)^2\right)^{1/2}n&&\text{ on }\Gamma(t)\times[0,T],
\end{align}
\end{subequations}
where the dissipative terms are chosen as
\begin{equation}\label{D}
\mathscr{D}^2\phi=\partial_2^2\phi\text{ or }\mathscr{D}^0\phi=\phi.
\end{equation}
Another similar model where the dissipation acts only on the velocity is the one by Joseph \& Wang \cite[Equation (6.7) and (6.8)]{joseph2004dissipation} (see also Wang \& Joseph \cite{wang2006purely}).

We would like to remark that, in the models of damped water waves mentioned so far, there are no dissipative effects acting on the free surface.

In a more recent paper, Dias, Dyachenko \& Zakharov \cite{dias2008theory} proposed a system where the free surface experiments dissipative effects. In particular, based on the linear problem, these authors derived
\begin{subequations}\label{eq:4}
\begin{align}
\Delta \phi&=0&&\text{ in }\Omega(t)\times[0,T],\\
\rho\left(\phi_t+\frac{1}{2}|\nabla\phi|^2+ Gh\right)&=-2\mu \partial_2^2\phi&&\text{ on }\Gamma(t)\times[0,T],\\
h_t&=\nabla\phi\cdot \left(1+\left(\partial_{1}h\right)^2\right)^{1/2}n+2\frac{\mu}{\rho}\partial_{1}^2h &&\text{ on }\Gamma(t)\times[0,T],
\end{align}
\end{subequations}
as a model of viscous water waves. This model was also considered by several other authors. Dutykh \& Dias \cite{dutykh2007viscous} obtain a new set of viscous potential free-surface flow equations in the spirit of \eqref{eq:4} taking into account the effects of the bottom topography. These authors also derived a long wave approximation. This approximate model takes the form of a nonlocal (in time) Boussinesq system (see also \cite{dutykh2009visco, dutykh2007dissipative,dutykhA,dutykhB}). Kakleas \& Nicholls \cite{kakleas2010numerical}, using the analytic dependence of the Dirichlet-Neumann operator, derived a system of two equations modelling \eqref{eq:4}. These equations are the viscous analog of the classical Craig-Sulem WW2 model and were mathematically studied by Ambrose, Bona \& Nicholls \cite{ambrose2012well}. The well-posedness of the full \eqref{eq:4} was studied very recently by Ngom \& Nicholls \cite{ngom2018well}. In particular these authors proved global existence of solutions starting from a small enough initial data for the case of non-vanishing surface tension $\gamma\neq0$.

Some other related results are those by Kharif, Kraenkel, Manna \& Thomas \cite{kharif2010modulational} and Hunt \& Dutykh \cite{hunt2014visco}. Kharif, Kraenkel, Manna \& Thomas studied a similar situation to \eqref{eq:4} within the framework of a forced and damped nonlinear Schr\"odinger equation (see also Touboul \& Kharif \cite{touboul2010nonlinear}), while Hunt \& Dutykh considered the problem of the interface motion under the capillary-gravity and an external electric force in the case of an incompressible, viscous, perfectly conducting fluid. Finally, let us mention the recent work by Guyenne \& Parau \cite{GP} where the authors applied a simplified version  of \eqref{eq:4} to model wave attenuation in sea ice.

\subsection{Plan of the paper}
First we obtain the dimensionless Eulerian formulation in the moving domain and transform it to a dimensionless Arbitrary Lagrangian-Eulerian (ALE) formulation in a fixed domain in section \ref{ref:damped}. Then we introduce the asymptotic expansion and obtain the cascade of linear equations for the different scales presents in the problem with $s=0$ corresponding to the models by Jiang, Ting, Perlin \& Schultz \cite{jiang1996moderate} and Wu, Liu \& Yue \cite{wu2006note} in Section \ref{ref:asymdampeds0}. After neglecting errors of $\mathcal{O}(\varepsilon^2)$ we find the nonlocal wave equation modelling the case $s=0$. After that we consider the case $s=2$ and, following a similar approach, find the nonlocal wave equation for the model of Dias, Dyachenko, and Zakharov \cite{dias2008theory}. Finally, we conclude with a parabolic system of Craig-Sulem flavour in section \ref{ref:asymdampeds3}.

\subsection{Notation}
Let $A$ be a matrix, and $b$ be a column vector. Then, we write $A^i_j$ for the component of $A$, located on row $i$ and column $j$. We will use the Einstein summation convention for expressions with indexes.

We write
$$
\partial_j f=\frac{\partial f}{\partial x_j},\quad f_t=\frac{\partial f}{\partial t }
$$
for the space derivative in the $j-$th direction and for a time derivative, respectively. 
Unless parenthesis are involved, every differential operator acts locally. For instance,
$$
\partial_1f\partial_1 \eta=(\partial_1f)(\partial_1 \eta).
$$

Let $f(x_1)$ denote a $L^2$ function on $\mathbb{S}^1$ (as usually, identified with the interval $[-\pi,\pi]$ with periodic boundary conditions). We define the
Hilbert transform $\mathcal{H}$ and the Dirichlet-to-Neumann operator $ \Lambda $ and its powers,  respectively,  using Fourier series
\begin{align}\label{Hilbert}
\widehat{\mathcal{H}f}(k)=-i\text{sgn}(k) \hat{f}(k) \,, \ \ 
\widehat{\Lambda f}(k)=|k|\hat{f}(k)\,, \ \ \widehat{\Lambda^s f}(k)&=|k|^s\hat{f}(k)\,,
\end{align}
where 
$$
\hat f(k) = {\dfrac{1}{2\pi}} \int_{\mathbb{S}^1}{}\, f(x_1) \ 
e^{-ikx_1}dx_1.
$$
In particular, for zero-mean functions, we note that
$$
\partial_1 \mathcal{H}=\Lambda,\quad \mathcal{H}^2=-1,\quad \partial_1\Lambda^{-1} = -\cH.
$$
These last equalities will be used extensively through the whole text. Finally, we define the commutator as
$$
\comm{A}{B}f=A(Bf)-B(Af).  
$$

\section{Damped water waves}\label{ref:damped}
\subsection{The equations in the Eulerian formulation}
We consider system the system
\begin{subequations}\label{eq:all}
\begin{align}
\Delta \phi&=0&&\text{ in }\Omega(t)\times[0,T],\\
\rho\left(\phi_t+\frac{1}{2}|\nabla\phi|^2+ Gh\right)-\gamma\mathcal{K}&=-\delta_1 \mathscr{D}^s \phi&&\text{ on }\Gamma(t)\times[0,T],\\
h_t&=\nabla\phi\cdot \left(1+\left(\partial_{1}h\right)^2\right)^{1/2}n+\delta_2 \partial_{1}^2 h&&\text{ on }\Gamma(t)\times[0,T],
\end{align}
\end{subequations}
where $\delta_i\geq0$ are constant, the dissipative terms are as in \eqref{D}, $\phi$ is the scalar potential (units of $length^2/time$), $h$ denotes the surface wave (units of $length$) and $G$ (units of $length/time^2$) is the gravity acceleration. The constant $\delta_1$ has units of $mass/(length^2\cdot time)$ (when $\mathscr{D}^0\phi=\phi$)  and of $mass/time$ (when $\mathscr{D}^2\phi=\partial_2^2\phi$) while $\delta_2$ has units of $length^2/time$. We observe that, for appropriate choice of $\delta_i$ and $s$ we recover (exactly) \eqref{eq:3b} and \eqref{eq:4}. Indeed, if $\delta_2=0$ we obtain the same model by Jiang, Ting, Perlin \& Schultz \cite{jiang1996moderate} ($\delta_2=0$ and $s=0$) and Wu, Liu \& Yue \cite{wu2006note} ($delta_2=0$ and $s=0$ or $s=2$), while if $\delta_2=\delta_1/\rho$ and $s=2$ we recover the model by Dias, Dyachenko \& Zakharov \cite{dias2008theory}. 

Following the pioneer work of Zakharov \cite{zakharov1968stability}, we use the trace of the velocity potential $\xi(t,x)=\phi(t,x,h(t,x))$ (units of $length^2/time$). Now we observe that
\begin{align*}
\xi_t(t,x)&=\phi_t(t,x,h(t,x))+\partial_2\phi(t,x,h(t,x))h_t(t,x)\\
&=\phi_t(t,x,h(t,x))+\partial_2\phi(t,x,h(t,x))\left(\nabla\phi\cdot(-\pax h,1)+\delta_2 \partial_1^2 h\right).
\end{align*}
Thus, \eqref{eq:all} can be written as
\begin{subequations}\label{eq:all2}
\begin{align}
\Delta \phi&=0&&\text{ in }\Omega(t)\times[0,T],\\
\phi  &= \xi \qquad &&\text{on }\Gamma(t)\times[0,T],\\
\xi_t &=\partial_2\phi\left(\nabla\phi\cdot(-\partial_1 h,1)+\delta_2 \partial_1^2 h\right)-\frac{1}{2}|\nabla \phi|^2-Gh+\frac{\gamma}{\rho}\mathcal{K}-\frac{\delta_1}{\rho} \mathscr{D}^s \phi&&\text{ on }\Gamma(t)\times[0,T],\\
h_t&=\nabla\phi\cdot \left(1+\left(\partial_{1}h\right)^2\right)^{1/2}n+\delta_2 \partial_{1}^2 h&&\text{ on }\Gamma(t)\times[0,T].
\end{align}
\end{subequations}
The system (\ref{eq:all2}) is supplemented with an initial condition for $h$ and $\xi$:
\begin{align}\label{eq:initial}
h(x,0)&=h_0(x),\\
\xi(x,0)&=\phi(x,h(0,x),0)=\xi_0(x).
\end{align}

\subsection{Nondimensional Eulerian formulation} \label{sec:nondim_Darcy}
We denote by $H$ and $L$ the typical amplitude and wavelength of the water wave. We change to dimensionless variables (denoted with $\tilde{\cdot}$)
\begin{align}\label{dimensionlessA}
x=L \ \tilde{x}, && t=\sqrt{\frac{L}{G}}\ \tilde{t},
\end{align}
and unknowns
\begin{align}\label{dimensionlessB}
h(x_1,t)=H  \ \tilde{h}(\tilde{x}_1,\tilde{t}), && \phi(x_1,x_2,t)=H\sqrt{G L}\tilde{\phi}(\tilde{x}_1,\tilde{x}_2,\tilde{t}).
\end{align}
with the non-dimensionalized fluid domain
\begin{align*}
\widetilde{\Omega}(t) & =\set{ \pare{ \tilde{x}_1, \tilde{x}_2} \ \left| \ -\pi<\tilde{x}_1< \pi\,, -\infty < \tilde{x}_2 < \frac{H}{L}\tilde{h}(\tilde{x}_1,t)\,, \ t\in[0,T]\right.  }, \\
\widetilde{\Gamma}(t) & =\set{ \pare{ \tilde{x}_1, \frac{H}{L}\tilde{h}(\tilde{x}_1,t)}\,, \ t\in[0,T] }
\end{align*}
We find the following dimensionless parameters:
\begin{align} \label{eq:dimensionless_parameters}
\varepsilon=\frac{H}{L}, && \alpha_1^{s}=\frac{\delta_1}{\rho \sqrt{G}L^{s-\frac{1}{2}}}, && \alpha_2=\frac{\delta_2 }{\sqrt{G}L^{3/2}}. && \beta=\frac{\gamma}{\rho G L^2},
\end{align}
where $s=0$ if $\mathscr{D}\phi=\phi$ and $s=2$ if $\mathscr{D}\phi=\partial_2^2\phi$. The first parameter is known as the \emph{steepness parameter} and measures the ratio between the amplitude and the wavelength of the wave. The $\alpha's$ consider the ratio between gravity and viscosity forces. Finally, the fourth one is the Bond number that compares gravity forces with capillary forces. Dropping the tildes for the sake of clarity, we have the following dimensionless form of the damped water waves problem
\begin{subequations}\label{eq:all2dimensionless}
\begin{align}
\Delta \phi&=0&&\text{ in }\Omega(t)\times[0,T],\\
\phi  &= \xi \qquad &&\text{on }\Gamma(t)\times[0,T],\\
\xi_t &=-\frac{\varepsilon}{2}|\nabla \phi|^2- h+\frac{\beta\partial_{1}^2h}{\left(1+\left(\varepsilon\partial_{1}h\right)^2\right)^{3/2}}-\alpha_1^{s}\mathscr{D}^s \phi &&\nonumber\\
&\quad+\varepsilon\partial_2\phi\left(\nabla\phi\cdot(-\varepsilon\partial_1 h,1)+\alpha_2 \partial_1^2 h\right)&&\text{ on }\Gamma(t)\times[0,T],\\
h_t&=\nabla\phi\cdot (-\varepsilon\partial_1 h,1)+\alpha_2\partial_{1}^2 h&&\text{ on }\Gamma(t)\times[0,T],
\end{align}
\end{subequations}
where we have used the nondimensional parameters \eqref{eq:dimensionless_parameters}.

\subsection{The equations in the Arbitrary Lagrangian-Eulerian formulation}\label{sec:muskat_fixed domain}
In the present section we want to express system \eqref{eq:all2dimensionless} on the reference domain $\Omega$ and reference interface $\Gamma$
\begin{align}\label{Omega}
\Omega = \mathbb{S}^1 \times (-\infty, 0) \,, &&
\Gamma = \mathbb{S}^1 \times \{0\} \,.
\end{align} 
The easiest way to do so is, supposing that $ h $ is regular, defining the following family (parametrized in $ t\in \bra{0, T} $) of diffeomorphisms
\begin{equation*}
\begin{aligned}
&\psi : && \bra{0, T} \times\Omega && \to && \Omega\pare{t}, \\
&&& \pare{x_1, x_2, t} && \mapsto && \psi\pare{x_1, x_2,t} = \pare{x_1, x_2 + \varepsilon h\pare{x_1,t}}. 
\end{aligned}
\end{equation*}
Such a technique has been already used in the past by different authors (see for instance \cite{CGSW18, Coutand-Shkoller:well-posedness-free-surface-incompressible, granero2019asymptotic, Lannes13, ngom2018well} and the references therein). We compute 
\begin{align}\label{eq:diif_matrices}
\nabla \psi = \pare{
\begin{array}{cc}
1 & 0 \\
\varepsilon \partial_1 h\pare{x_1,t} & 1
\end{array}
}, &&
A = \pare{\nabla\psi}^{-1} = \pare{
\begin{array}{cc}
1 & 0 \\
-\varepsilon \partial_1 h\pare{x_1,t} & 1
\end{array}
}. 
\end{align}
With such map we can define the push-back of any application $ \theta $ defined on $ \Omega\pare{t} $ simply as $ \Theta = \theta \circ \psi $, whence in particular we define
\begin{align*}
\Phi = \phi \circ \psi\;. 
\end{align*}
We let $N =e_2$ denote the outward unit normal to $\Omega$ at $\Gamma$. We also recall that, if $ \Theta = \theta \circ \psi $, the following formula holds
\begin{equation*}
\partial_j \theta \circ \psi = A^k_j \partial_k \Theta, 
\end{equation*}
where Einstein convention is used. Then, we can rewrite \eqref{eq:all2dimensionless} as the following system of variable coefficients nonlinear PDEs posed on a fixed reference domain
\begin{subequations}\label{eq:ALE}
\begin{align}
A^\ell_j\partial_\ell\left(A^k_j\partial_k \Phi\right)&=0&&\text{ in }\Omega\times[0,T],\\
\Phi  &= \xi \qquad &&\text{on }\Gamma\times[0,T],\\
\xi_t &=-\frac{\varepsilon}{2}A^k_j\partial_k\Phi A^\ell_j\partial_\ell\Phi- h+\frac{\beta\partial_{1}^2h}{\left(1+\left(\varepsilon\partial_{1}h\right)^2\right)^{3/2}}-\alpha_1^{s}\mathcal{D}^s \Phi &&\nonumber\\
&\quad+\varepsilon A^k_2\partial_k\Phi\left(A^\ell_j\partial_\ell\Phi A^2_j+\alpha_2 \partial_1^2 h\right)&&\text{ on }\Gamma\times[0,T],\\
h_t&=A^k_j\partial_k\Phi A^2_j+\alpha_2\partial_{1}^2 h&&\text{ on }\Gamma\times[0,T],
\end{align}
\end{subequations}
where the operator $\mathcal{D}^s$ is
$$
\mathcal{D}^0\Phi=\xi,\;\mathcal{D}^2\Phi=A^\ell_2\partial_\ell\left(A^k_2\partial_k \Phi\right).
$$

Next we explicit the values of the $ A_j^i $'s in the above system (see \eqref{eq:diif_matrices}) obtaining hence 
\begin{subequations}\label{eq:ALE2}
\begin{align}
\Delta \Phi &={\varepsilon\pare{\partial_1^2 h \ \partial_2 \Phi + 2\partial_1 h \ \partial_{12}\Phi}-\varepsilon^2(\partial_1 h)^2\partial^2_2\Phi}  ,  &&\text{in } \Omega\times[0,T]\,,\\
\Phi  &= \xi \qquad &&\text{on }\Gamma\times[0,T],\\
\xi_t &=-\frac{\varepsilon}{2}\left[(\partial_1\Phi)^2+(\varepsilon \partial_1 h\partial_2\Phi)^2+(\partial_2\Phi)^2-2\varepsilon \partial_1 h\partial_2\Phi\partial_1\Phi\right]&&\nonumber\\
&\quad- h+\frac{\beta\partial_{1}^2h}{\left(1+\left(\varepsilon\partial_{1}h\right)^2\right)^{3/2}}-\alpha_1^{s}\mathcal{D}^s \Phi &&\nonumber\\
&\quad+\varepsilon \partial_2\Phi\left(-\varepsilon \partial_1 h \partial_1 \Phi+\varepsilon^2 (\partial_1 h)^2 \partial_2\Phi +\partial_2\Phi+\alpha_2 \partial_1^2 h\right)&&\text{ on }\Gamma\times[0,T],\\
h_t&=-\varepsilon \partial_1 h \partial_1 \Phi+\varepsilon^2 (\partial_1 h)^2 \partial_2\Phi +\partial_2\Phi+\alpha_2\partial_{1}^2 h&&\text{ on }\Gamma\times[0,T],
\end{align}
\end{subequations}
where
$$
\mathcal{D}^0\Phi=\Phi,\;\mathcal{D}^2\Phi=\partial_2^2 \Phi.
$$

\section{The asymptotic model for damped water waves when $s=0$} \label{ref:asymdampeds0}
In this section we consider the case $s=0$ (the model by Jiang, Ting, Perlin \& Schultz \cite{jiang1996moderate} and Wu, Liu \& Yue \cite{wu2006note}). In this case we have that
$$
\mathcal{D}^0\Phi=\Phi.
$$
We introduce the following ansatz:
\begin{equation}
\label{eq:ansatz}
\begin{aligned}
\Phi\pare{x_1,x_2,t} & = \sum_n \varepsilon^n \Phi^{\pare{n}}\pare{x_1,x_2,t}, \\
\xi\pare{x_1,t} & = \sum_n \varepsilon^n \xi^{\pare{n}}\pare{x_1,t}, \\
h\pare{x_1,t} & = \sum_n \varepsilon^n h^{\pare{n}}\pare{x_1,t}.
\end{aligned}
\end{equation} 
With this ansatz we can re-profile the nonlinear system \eqref{eq:ALE2} in an equivalent sequence of linear systems where the evolution of the $ n $-th profile is determined by the evolution of the preceding $ n-1 $ profiles. 

We are interested in a model approximating \eqref{eq:ALE2} with an error $\mathcal{O}(\varepsilon^2)$. Using that
$$
\frac{1}{\left(1+x^2\right)^{3/2}}=1+\mathcal{O}(x^2),
$$
we obtain that
$$
\frac{\beta\partial_{1}^2h}{\left(1+\left(\varepsilon\partial_{1}h\right)^2\right)^{3/2}}=\beta\partial_{1}^2h+\mathcal{O}(\varepsilon^2).
$$
For the case $n=0$, we have that
\begin{subequations}\label{eq:n0}
\begin{align}
\Delta \Phi^{(0)} &=0 ,  &&\text{in } \Omega\times[0,T]\,,\\
\Phi^{(0)}  &= \xi^{(0)}  \qquad &&\text{on }\Gamma\times[0,T],\\
\xi^{(0)} _t &=- h^{\pare{0}} +\beta\partial_{1}^2h^{\pare{0}} -\alpha_1^{0}\Phi^{(0)}  &&\text{ on }\Gamma\times[0,T],\\
h_t^{(0)} &=\partial_2\Phi^{(0)} +\alpha_2\partial_{1}^2 h^{\pare{0}} &&\text{ on }\Gamma\times[0,T].
\end{align}
\end{subequations}
Recalling that
$$
\widehat{\Phi^{\pare{0}}} \pare{k, x_2, t}  = \xi^{(0)}(k,t)e^{|k|x_2}\qquad \text{in } \Omega\times[0,T]
$$
so
$$
\partial_2 \Phi^{(0)}=\Lambda \xi^{(0)}\qquad \text{on }\Gamma,
$$
we find that (\ref{eq:n0}d) can be equivalently written as
\begin{align*}
h_{tt}^{(0)} &=\Lambda\left(- h^{\pare{0}} +\beta\partial_{1}^2h^{\pare{0}} -\alpha_1^{0}\xi^{(0)}\right) +\alpha_2\partial_{1}^2 h^{\pare{0}}_t &&\text{ on }\Gamma\times[0,T].
\end{align*}
We note that (\ref{eq:n0}d) can be equivalently wwritten as
\begin{equation}
\label{eq:xi0_as_function_of_h0}
\xi^{(0)} =\Lambda^{-1}\left[h^{\pare{0}} _t -\alpha_2\partial_{1}^2 h^{\pare{0}}\right],
\end{equation}
thus,
\begin{align}\label{eq:eq_h0_0}
h_{tt}^{(0)} &=\Lambda\left(- h^{\pare{0}} +\beta\partial_{1}^2h^{\pare{0}} -\alpha_1^{0}\Lambda^{-1}\left[h^{\pare{0}} _t -\alpha_2\partial_{1}^2 h^{\pare{0}}\right]\right) +\alpha_2\partial_{1}^2 h^{\pare{0}}_t &&\text{ on }\Gamma\times[0,T].
\end{align}

Similarly, in the case $n=1$, we find that
\begin{subequations}\label{eq:n1}
\begin{align}
\Delta \Phi^{(1)} &=\partial_1^2 h^{\pare{0}} \ \partial_2 \Phi^{(0)} + 2\partial_1 h^{\pare{0}} \ \partial_{12}\Phi^{(0)}  ,  &&\text{in } \Omega\times[0,T]\,,\label{eq:n1Phi}\\
\Phi^{(1)}  &= \xi^{(1)} \qquad &&\text{on }\Gamma\times[0,T],\\
\xi^{(1)}_t &=\frac{1}{2}\left[(\partial_2\Phi^{(0)})^2-(\partial_1\Phi^{(0)})^2\right]&&\nonumber\\
&\quad- h^{\pare{1}}+\beta\partial_{1}^2h^{\pare{1}}-\alpha_1^{0}\Phi^{(1)} +\alpha_2\partial_2\Phi^{(0)} \partial_1^2 h^{\pare{0}}&&\text{ on }\Gamma\times[0,T],\label{eq:n1xi}\\
h^{\pare{1}}_t&=-\partial_1 h^{\pare{0}} \partial_1 \Phi^{(0)} +\partial_2\Phi^{(1)}+\alpha_2\partial_{1}^2 h^{\pare{1}}&&\text{ on }\Gamma\times[0,T],\label{eq:n1h}
\end{align}
\end{subequations}

Let us define
\begin{equation*}
b = \partial_1^2 h^{\pare{0}} \partial_2 \Phi ^{\pare{0}} + 2 \partial_1 h^{\pare{0}} \partial_{12} \Phi ^{\pare{0}}, 
\end{equation*}
We now use Lemma \ref{lem:solutions_Poisson} in order to compute
\begin{align*}
\partial_2 \widehat{\Phi^{\pare{1}}} \pare{k, 0, t} & = \int _{ -\infty}^0 \hat{b}\pare{k, y_2, t} e^{\av{k} y_2 } \textnormal{d} y_2 + \av{k} \widehat{\xi^{\pare{1}}}\pare{k, t}. 
\end{align*}

We want to provide an explicit expression for the term $ \int _{ -\infty}^0 \hat{b}\pare{k, y_2, t} e^{\av{k} y_2 } \textnormal{d} y_2  $ considering the form of $ b $. We compute that
\begin{align*}
\int _{ -\infty}^0 \hat{b}\pare{k, y_2, t} e^{\av{k} y_2 } \textnormal{d} y_2 & = - 
\int_{-\infty}^0 e^{\pare{\av{k}+ \av{m}}y_2}\pare{k-m} \pare{ k+m}\av{m}  \widehat{h^{(0)}}\pare{k-m}\widehat{\xi^{(0)}}\pare{m} \dd y_2
, \\
& = -
\frac{ \av{m}\bra{\av{k}^2 - \av{m}^2}}{\av{k} + \av{m}} 	\ \widehat{h^{(0)}}\pare{k-m}\widehat{\xi^{(0)}}\pare{m}
, \\
& = -
 \av{m}\bra{\av{k} - \av{m}} 	\ \widehat{h^{(0)}}\pare{k-m}\widehat{\xi^{(0)}}\pare{m}
.
\end{align*}

Thus, we find that
\begin{equation}\label{eq:explicit_expression_Phi1}
\left. \partial_2 \Phi^{\pare{1}}\right|_{x_2=0} =   \Lambda \xi^{\pare{1}} -\comm{\Lambda}{h^{\pare{0}}}\Lambda \xi^{ \pare{0}}.
\end{equation}

The evolution equations for $ h^{\pare{1}} $ and $ \xi^{\pare{1}} $ become hence
\begin{align}
h^{\pare{1}}_t & =-\partial_1 h^{\pare{0}} \partial_1 \xi^{(0)} +   \Lambda \xi^{\pare{1}} -\comm{\Lambda}{h^{\pare{0}}}\Lambda \xi^{ \pare{0}}+\alpha_2\partial_{1}^2 h^{\pare{1}}, \label{eq:n1h1} \\
\xi^{(1)}_t &=\frac{1}{2}\left[ \pare{ \Lambda\xi^{(0)}}^2 - \pare{\partial_1\xi^{(0)}}^2\right]  \nonumber  \\
&\quad- h^{\pare{1}}+\beta\partial_{1}^2h^{\pare{1}}-\alpha_1^{0}\Phi^{(1)}     
+\alpha_2 \Lambda\xi^{(0)}\partial_1^2 h^{\pare{0}}. \label{eq:n1xi1}
\end{align}

Using the above equation for $h_t^{(1)}$ \eqref{eq:n1h1} we can express $ \xi^{\pare{1}} $ as a function of $ h^{\pare{0}}$, $ \xi^{\pare{0}} $ and $h^{\pare{1}} $ as follows
\begin{equation}\label{eq:xi1}
\xi^{\pare{1}} = \Lambda^{-1}\bra{h^{\pare{1}}_t +\partial_1 h^{\pare{0}} \partial_1\xi^{\pare{0}} + \comm{\Lambda}{h^{\pare{0}}}\Lambda \xi^{ \pare{0}} - \alpha_2\partial_{1}^2 h^{\pare{1}} }.
\end{equation}
Time differentiating \eqref{eq:n1h1} and inserting \eqref{eq:n1xi1}, we deduce
\begin{multline*}
h^{\pare{1}}_{tt} =-\partial_1 h^{\pare{0}}_t \partial_1\xi^{\pare{0}}-\partial_1 h^{\pare{0}} \partial_1\xi^{\pare{0}}_t
 +\frac{1}{2} \Lambda \left[\pare{\Lambda \xi^{(0)}}^2- \left(\partial_1\xi^{(0)}\right)^2\right] \\
- \Lambda  h^{\pare{1}}+\beta\Lambda \partial_{1}^2h^{\pare{1}}-\alpha_1^{0}\Lambda \Phi^{(1)} +\alpha_2 \Lambda\pare{\Lambda \xi^{\pare{0}} \partial_{1}^2 h^{\pare{0}}}\\
  -\comm{\Lambda}{h^{\pare{0}}_t}\Lambda \xi^{ \pare{0}} - \comm{\Lambda}{h^{\pare{0}}}\Lambda \xi^{ \pare{0}}_t
 +\alpha_2\partial_{1}^2 h^{\pare{1}}_t. 
\end{multline*}
Recalling the definition of the Riesz potential $\Lambda^{-1}$ and using (\ref{eq:n0}c) and \eqref{eq:xi0_as_function_of_h0} in order to express $ \xi^{\pare{0}} $ and $ \xi^{\pare{0}}_t $ in terms of $ h^{\pare{0}}$, we find that 
\begin{multline*}
h^{\pare{1}}_{tt} = \partial_1 h^{\pare{0}}_t \cH \left[h^{\pare{0}} _t -\alpha_2\partial_{1}^2 h^{\pare{0}}\right]-\partial_1 h^{\pare{0}} \partial_1\bra{- h^{\pare{0}} +\beta\partial_{1}^2h^{\pare{0}} -\alpha_1^{0} \Phi^{(0)}}\\
  +\frac{1}{2}\Lambda \set{\left[h^{\pare{0}} _t -\alpha_2\partial_{1}^2 h^{\pare{0}}\right]^2 -\pare{ \cH\left[h^{\pare{0}} _t -\alpha_2\partial_{1}^2 h^{\pare{0}}\right]}^2} \\
-\Lambda  h^{\pare{1}}+\beta\Lambda \partial_{1}^2h^{\pare{1}}-\alpha_1^{0} \Lambda \Phi^{(1)} 
+\alpha_2 \Lambda \bra{ \pare{ h^{\pare{0}} _t -\alpha_2\partial_{1}^2 h^{\pare{0}}} \partial_1 ^2 h^{\pare{0}}  } \\
 -\comm{\Lambda}{h^{\pare{0}}_t} \left(h^{\pare{0}} _t -\alpha_2\partial_{1}^2 h^{\pare{0}}\right) - \comm{\Lambda}{h^{\pare{0}}}\Lambda \pare{- h^{\pare{0}} +\beta\partial_{1}^2h^{\pare{0}} -\alpha_1^{0} \Phi^{(0)}}
 +\alpha_2\partial_{1}^2 h^{\pare{1}}_t.
\end{multline*}
Using Tricomi identity 
\begin{equation}\label{tricomi}
(\cH f)^2-f^2=2\cH\left(f\cH f\right),
\end{equation}
the previous equation can be further simplified and we find that
\begin{multline*}
h^{\pare{1}}_{tt} = \partial_1 h^{\pare{0}}_t \cH \left[h^{\pare{0}} _t -\alpha_2\partial_{1}^2 h^{\pare{0}}\right]-\partial_1 h^{\pare{0}} \partial_1\bra{- h^{\pare{0}} +\beta\partial_{1}^2h^{\pare{0}} -\alpha_1^{0} \Phi^{(0)}}\\
  +\partial_1\set{\left[h^{\pare{0}} _t -\alpha_2\partial_{1}^2 h^{\pare{0}}\right]\cH\left[h^{\pare{0}} _t -\alpha_2\partial_{1}^2 h^{\pare{0}}\right]} \\
-\Lambda  h^{\pare{1}}+\beta\Lambda \partial_{1}^2h^{\pare{1}}-\alpha_1^{0} \Lambda \Phi^{(1)} 
+\alpha_2 \Lambda \bra{ \pare{ h^{\pare{0}} _t -\alpha_2\partial_{1}^2 h^{\pare{0}}} \partial_1 ^2 h^{\pare{0}}  } \\
 -\comm{\Lambda}{h^{\pare{0}}_t} \left(h^{\pare{0}} _t -\alpha_2\partial_{1}^2 h^{\pare{0}}\right) - \comm{\Lambda}{h^{\pare{0}}}\Lambda \pare{- h^{\pare{0}} +\beta\partial_{1}^2h^{\pare{0}} -\alpha_1^{0} \Phi^{(0)}}
 +\alpha_2\partial_{1}^2 h^{\pare{1}}_t.
\end{multline*}

We can express $ \alpha_1^0 \Phi^{\pare{0}} $ in terms of $ h^{\pare{0}} $ as follows
$$
\alpha_1^0 \Phi^{\pare{0}}\bigg{|}_{x_2=0}=\alpha_1^0 \xi^{\pare{0}}  = \alpha_1^0\Lambda^{-1}\left[h^{\pare{0}} _t -\alpha_2\partial_{1}^2 h^{\pare{0}}\right],
$$
and, inserting the previous formula into \eqref{eq:xi1}, we find that
\begin{equation*}
\begin{aligned}
\left. \alpha_1^0 \Phi^{\pare{1}} \right|_{x_2=0} & = \alpha_1^0  \xi^{\pare{1}}, \\
& = \alpha_1^0 \Lambda^{-1} \bra{h^{\pare{1}}_t +\partial_1 h^{\pare{0}} \partial_1\xi^{\pare{0}} + \comm{\Lambda}{h^{\pare{0}}}\Lambda \xi^{ \pare{0}} - \alpha_2\partial_{1}^2 h^{\pare{1}} } , \\
& =  \alpha_1^0 \Lambda^{-1} \set{ h^{\pare{1}}_t -\partial_1 h^{\pare{0}} \cH\bra{h^{\pare{0}} _t -\alpha_2\partial_{1}^2 h^{\pare{0}}} + \comm{\Lambda}{h^{\pare{0}}} \bra{h^{\pare{0}} _t -\alpha_2\partial_{1}^2 h^{\pare{0}}} - \alpha_2\partial_{1}^2 h^{\pare{1}} }.
\end{aligned} 
\end{equation*}

Substituting the previous expressions into the equation for $h_{tt}^{(1)}$, we deduce the following equation:
\begin{multline*}
h^{\pare{1}}_{tt} = \partial_1 h^{\pare{0}}_t \cH \left[h^{\pare{0}} _t -\alpha_2\partial_{1}^2 h^{\pare{0}}\right]-\partial_1 h^{\pare{0}} \partial_1\bra{- h^{\pare{0}} +\beta\partial_{1}^2h^{\pare{0}} -\alpha_1^0\Lambda^{-1}\left[h^{\pare{0}} _t -\alpha_2\partial_{1}^2 h^{\pare{0}}\right]}\\
  +\partial_1\set{\left[h^{\pare{0}} _t -\alpha_2\partial_{1}^2 h^{\pare{0}}\right]\cH\left[h^{\pare{0}} _t -\alpha_2\partial_{1}^2 h^{\pare{0}}\right]} -\Lambda  h^{\pare{1}}+\beta\Lambda \partial_{1}^2h^{\pare{1}}\\
-\alpha_1^0 \set{ h^{\pare{1}}_t -\partial_1 h^{\pare{0}} \cH\bra{h^{\pare{0}} _t -\alpha_2\partial_{1}^2 h^{\pare{0}}} + \comm{\Lambda}{h^{\pare{0}}} \bra{h^{\pare{0}} _t -\alpha_2\partial_{1}^2 h^{\pare{0}}} - \alpha_2\partial_{1}^2 h^{\pare{1}} }\\
+\alpha_2 \Lambda \bra{ \pare{ h^{\pare{0}} _t -\alpha_2\partial_{1}^2 h^{\pare{0}}} \partial_1 ^2 h^{\pare{0}}  }  -\comm{\Lambda}{h^{\pare{0}}_t} \left(h^{\pare{0}} _t -\alpha_2\partial_{1}^2 h^{\pare{0}}\right)\\
 - \comm{\Lambda}{h^{\pare{0}}} \pare{- \Lambda h^{\pare{0}} +\beta\Lambda\partial_{1}^2h^{\pare{0}} -\alpha_1^0\left[h^{\pare{0}} _t -\alpha_2\partial_{1}^2 h^{\pare{0}}\right]}
 +\alpha_2\partial_{1}^2 h^{\pare{1}}_t.
\end{multline*}

We group the nonlinear terms according to the coefficient in front. At $\mathcal{O}(1)$ we find that
\begin{multline}
\partial_1 h^{\pare{0}}_t \cH h^{\pare{0}} _t +\left(\partial_1 h^{\pare{0}}\right)^2+\frac{\Lambda}{2}\set{\left[h^{\pare{0}} _t\right]^2 -\pare{ \cH h^{\pare{0}} _t }^2}  -\comm{\Lambda}{h^{\pare{0}}_t} h^{\pare{0}} _t + \comm{\Lambda}{h^{\pare{0}}}\Lambda h^{\pare{0}}  \\
=-\Lambda\left(\left(\cH h^{(0)}_t\right)^2\right)+\partial_1\comm{\cH}{h^{(0)}}\Lambda h^{(0)},\label{O(1)}
\end{multline}
where we have used the identity \eqref{tricomi}. At $\mathcal{O}(\beta)$ we obtain that
\begin{align}
\beta\left(-\partial_1 h^{\pare{0}} \partial_{1}^3h^{\pare{0}} - \comm{\Lambda}{h^{\pare{0}}}\Lambda \partial_{1}^2h^{\pare{0}}\right)&=\beta\left(\Lambda\left(h^{\pare{0}}\Lambda^3 h^{\pare{0}}\right)-\partial_1\left(h^{\pare{0}}\partial_1^3 h^{\pare{0}}\right)\right)\nonumber\\
&=\beta\partial_1\comm{\cH}{h^{(0)}}\Lambda^3 h^{\pare{0}}.\label{O(beta)} 
\end{align}

At $\mathcal{O}(\alpha_2)$ we find the following contribution
\begin{multline}\label{eq:Oalpha_2v1}
-\alpha_2\bigg{[}\partial_1 h^{\pare{0}}_t \cH \partial_{1}^2 h^{\pare{0}} +\partial_1\set{h^{\pare{0}} _t \cH \partial_{1}^2 h^{\pare{0}}} +  \partial_1\set{\partial_{1}^2 h^{\pare{0}}\cH h^{\pare{0}} _t }\\
- \Lambda \bra{ h^{\pare{0}} _t  \partial_1 ^2 h^{\pare{0}}  }  -\comm{\Lambda}{h^{\pare{0}}_t}\partial_{1}^2 h^{\pare{0}}\bigg{]}\\
=-\alpha_2\bigg{[}\partial_1 h^{\pare{0}}_t \cH \partial_{1}^2 h^{\pare{0}} +\partial_1\set{h^{\pare{0}} _t \cH \partial_{1}^2 h^{\pare{0}}} +  \partial_1\set{\partial_{1}^2 h^{\pare{0}}\cH h^{\pare{0}} _t }\\
- 2\Lambda \bra{ h^{\pare{0}} _t  \partial_1 ^2 h^{\pare{0}}  }  +h^{\pare{0}}_t\Lambda\partial_{1}^2 h^{\pare{0}}\bigg{]}.
\end{multline}
Using
\begin{equation}\label{tricomi2}
\cH f\cH g-\cH\left(f\cH g+g\cH f\right)=fg,
\end{equation}
we find that
$$
\Lambda \bra{ h^{\pare{0}} _t  \partial_1 ^2 h^{\pare{0}}}=\Lambda\left(\cH h^{\pare{0}} _t\cH \partial_1 ^2 h^{\pare{0}}\right)+\partial_1\left(h^{\pare{0}} _t\cH \partial_1 ^2 h^{\pare{0}}+\partial_1 ^2 h^{\pare{0}}\cH h^{\pare{0}} _t\right).
$$
Thus, we can group terms in \eqref{eq:Oalpha_2v1} as follows
\begin{multline}\label{eq:Oalpha_2}
-\alpha_2\bigg{[}\partial_1 \left(h^{\pare{0}}_t \cH \partial_{1}^2 h^{\pare{0}}\right)- 2\Lambda\left(\cH h^{\pare{0}} _t\cH \partial_1 ^2 h^{\pare{0}}\right)-\partial_1\left(h^{\pare{0}} _t\cH \partial_1 ^2 h^{\pare{0}}+\partial_1 ^2 h^{\pare{0}}\cH h^{\pare{0}} _t\right) \bigg{]}\\
=\alpha_2\partial_1\comm{\cH}{\cH h^{\pare{0}} _t}\cH \partial_1 ^2 h^{\pare{0}}+\alpha_2\Lambda\left(\cH h^{\pare{0}} _t\cH \partial_1 ^2 h^{\pare{0}}\right).
\end{multline}
At $\mathcal{O}(\alpha_2\alpha_2)$, we find that
\begin{equation}\label{eq:Oalpha_22}
\alpha_2^2\bigg{[}\partial_1\set{\partial_{1}^2 h^{\pare{0}}\cH\partial_{1}^2 h^{\pare{0}}} - \Lambda\left[\left(\partial_{1}^2 h^{\pare{0}}\right)^2\right]  \bigg{]}=-\alpha_2^2\partial_1\comm{\cH}{\partial_{1}^2 h^{\pare{0}}}\partial_{1}^2 h^{\pare{0}}.
\end{equation}
We group now the $\mathcal{O}(\alpha_1^0)$ terms:
\begin{equation}\label{eq:Oalpha10}
\alpha_1^0\bigg{[}-\partial_1 h^{\pare{0}} \cH h^{\pare{0}} _t+\partial_1 h^{\pare{0}} \cH h^{\pare{0}} _t  - \comm{\Lambda}{h^{\pare{0}}} h^{\pare{0}} _t  + \comm{\Lambda}{h^{\pare{0}}} h^{\pare{0}} _t\bigg{]} =0.
\end{equation}
Finally, we are left with the $\mathcal{O}(\alpha_1^0\alpha_2)$ terms. These terms are
\begin{equation}\label{eq:Oalpha_01alpha2}
\alpha_1^0\alpha_2\bigg{[}\partial_1 h^{\pare{0}}\cH\partial_{1}^2 h^{\pare{0}}-\partial_1 h^{\pare{0}} \cH\partial_{1}^2 h^{\pare{0}} + \comm{\Lambda}{h^{\pare{0}}}\partial_{1}^2 h^{\pare{0}} 
 - \comm{\Lambda}{h^{\pare{0}}} \partial_{1}^2 h^{\pare{0}}\bigg{]}=0.
\end{equation}
Thus, using \eqref{O(1)}, \eqref{O(beta)}, \eqref{eq:Oalpha_2}, \eqref{eq:Oalpha_22}, \eqref{eq:Oalpha10} and \eqref{eq:Oalpha_01alpha2}, we conclude that
\begin{multline*}
h^{\pare{1}}_{tt} +\Lambda  h^{\pare{1}}+\beta\Lambda^3 h^{\pare{1}}+\alpha^0_1 h^{\pare{1}}_t-\alpha_1^0 \alpha_2\partial_{1}^2 h^{\pare{1}}-\alpha_2\partial_{1}^2 h^{\pare{1}}_t\\
=-\Lambda\left(\left(\cH h^{(0)}_t\right)^2\right)+\partial_1\comm{\cH}{h^{(0)}}\Lambda h^{(0)}+\beta\partial_1\comm{\cH}{h^{(0)}}\Lambda^3 h^{\pare{0}}
+\alpha_2\partial_1\comm{\cH}{\cH h^{\pare{0}} _t}\cH \partial_1 ^2 h^{\pare{0}}\\+\alpha_2\Lambda\left(\cH h^{\pare{0}} _t\cH \partial_1 ^2 h^{\pare{0}}\right)-\alpha_2^2\partial_1\comm{\cH}{\partial_{1}^2 h^{\pare{0}}}\partial_{1}^2 h^{\pare{0}}.
\end{multline*}
We define the renormalized variable
\begin{equation}\label{renormalized}
f =  h^{\pare{0}} + \varepsilon h^{\pare{1}}.
\end{equation}
Using
$$
\varepsilon h^{\pare{0}} =\varepsilon f +\mathcal{O}(\varepsilon^2),
$$
and neglecting errors $\mathcal{O}(\varepsilon^2)$, we conclude the following model:
\begin{multline}\label{models0}
f_{tt} +\Lambda  f+\beta \Lambda^3f+\alpha^0_1 f_t-\alpha_1^0 \alpha_2\partial_{1}^2 f-\alpha_2\partial_{1}^2 f_t\\
=\varepsilon\bigg{[}-\Lambda\left(\left(\cH f_t\right)^2\right)+\partial_1\comm{\cH}{f}\Lambda f+\beta\partial_1\comm{\cH}{f}\Lambda^3 f
+\alpha_2\partial_1\comm{\cH}{\cH f_t}\cH \partial_1 ^2 f\\+\alpha_2\Lambda\left(\cH f_t\cH \partial_1 ^2 f\right)-\alpha_2^2\partial_1\comm{\cH}{\partial_{1}^2 f}\partial_{1}^2 f\bigg{]}.
\end{multline}
When $\alpha_2=0$, equation \eqref{models0} is an asymptotic model of the damped water waves system proposed by Jiang, Ting, Perlin \& Schultz \cite{jiang1996moderate} and Wu, Liu \& Yue \cite{wu2006note}. Also, when $\alpha_2=\alpha_1^0=0$, equation \eqref{models0} recovers the quadratic $h-$model in \cite{CGSW18, matsuno1992nonlinear, matsuno1993two, matsuno1993nonlinear, AkMi2010, AkNi2010}.

\section{The asymptotic model for damped water waves when $s=2$} \label{ref:asymdampeds2}
In this section we focus on the case $s=2$ (the model by Dias, Dyachenko, and Zakharov \cite{dias2008theory}). In this case we have that
$$
\mathcal{D}^2\Phi=\partial_2^2\Phi.
$$
We use the ansatz \eqref{eq:ansatz} and follow the previous steps. The first term in the series solves
\begin{subequations}\label{eq:n0s2}
\begin{align}
\Delta \Phi^{(0)} &=0 ,  &&\text{in } \Omega\times[0,T]\,,\\
\Phi^{(0)}  &= \xi^{(0)}  \qquad &&\text{on }\Gamma\times[0,T],\\
\xi^{(0)} _t &=- h^{\pare{0}} +\beta\partial_{1}^2h^{\pare{0}} -\alpha_1^{2}\partial_2^2 \Phi^{(0)}  &&\text{ on }\Gamma\times[0,T],\\
h_t^{(0)} &=\partial_2\Phi^{(0)} +\alpha_2\partial_{1}^2 h^{\pare{0}} &&\text{ on }\Gamma\times[0,T].
\end{align}
\end{subequations}
Taking a time derivative of the equation (\ref{eq:n0s2}d), using the fact that
$$
\partial_2\Phi^{(0)}\bigg{|}_{x_2=0}=\Lambda \xi^{(0)}=h^{(0)}_t-\alpha_2\partial_1^2h^{(0)}
$$
and substituting (\ref{eq:n0s2}c), we find that
\begin{align*}
h_{tt}^{(0)} &=\Lambda\left(- h^{\pare{0}} +\beta\partial_{1}^2h^{\pare{0}} -\alpha_1^{2}\partial_2^2\Phi^{(0)}\right) +\alpha_2\partial_{1}^2 h^{\pare{0}}_t &&\text{ on }\Gamma\times[0,T].
\end{align*}
Similarly, due to the fact that
$$
\partial_2^2\Phi^{(0)}\bigg{|}_{x_2=0}=\Lambda^2\xi^{(0)}=\Lambda\left[h^{(0)}_t-\alpha_2\partial_1^2h^{(0)}\right]
$$
we find that the previous equation for $h_{tt}$ can be written as
\begin{align}\label{eq:eq_h0_1s2}
h_{tt}^{(0)} &=- \Lambda h^{\pare{0}} -\beta\Lambda^3h^{\pare{0}} +\alpha_1^{2}\partial_1^2 h^{\pare{0}} _t -\alpha_1^2\alpha_2\partial_{1}^4 h^{\pare{0}} +\alpha_2\partial_{1}^2 h^{\pare{0}}_t &&\text{ on }\Gamma\times[0,T].
\end{align}

Analogously as in \eqref{eq:n1}, for $n=1$, we find that
\begin{subequations}\label{eq:n1s2}
\begin{align}
\Delta \Phi^{(1)} &=\partial_1^2 h^{\pare{0}} \ \partial_2 \Phi^{(0)} + 2\partial_1 h^{\pare{0}} \ \partial_{12}\Phi^{(0)}  ,  &&\text{in } \Omega\times[0,T]\,,\\
\Phi^{(1)}  &= \xi^{(1)} \qquad &&\text{on }\Gamma\times[0,T],\\
\xi^{(1)}_t &=\frac{1}{2}\left[(\partial_2\Phi^{(0)})^2-(\partial_1\Phi^{(0)})^2\right]&&\nonumber\\
&\quad- h^{\pare{1}}+\beta\partial_{1}^2h^{\pare{1}}-\alpha_1^{2}\partial_2^2\Phi^{(1)} +\alpha_2\partial_2\Phi^{(0)} \partial_1^2 h^{\pare{0}}&&\text{ on }\Gamma\times[0,T],\\
h^{\pare{1}}_t&=-\partial_1 h^{\pare{0}} \partial_1 \Phi^{(0)} +\partial_2\Phi^{(1)}+\alpha_2\partial_{1}^2 h^{\pare{1}}&&\text{ on }\Gamma\times[0,T],
\end{align}
\end{subequations}

We use Lemma \ref{lem:solutions_Poisson} and \eqref{eq:explicit_expression_Phi1} to find that
\begin{align*}
\partial_2 \Phi^{\pare{1}}\bigg{|}_{x_2=0} &=   \Lambda \xi^{\pare{1}} -\comm{\Lambda}{h^{\pare{0}}}\Lambda \xi^{ \pare{0}}\\
\partial_2^2 \Phi^{\pare{1}} \bigg{|}_{x_2=0} & = \Lambda^2\xi^{(1)}+\partial_1^2 h^{\pare{0}}\Lambda\xi^{\pare{0}} + 2\partial_1 h^{\pare{0}}\partial_1 \Lambda \xi^{\pare{0}}.
\end{align*}

Then we find the following system of equations
\begin{align}
h^{\pare{1}}_t & =-\partial_1 h^{\pare{0}} \partial_1 \xi^{(0)} +   \Lambda \xi^{\pare{1}} -\comm{\Lambda }{  h^{\pare{0}}}\Lambda \xi^{ \pare{0}}+\alpha_2\partial_{1}^2 h^{\pare{1}}, \label{eq:n1h1s=2} \\
\xi^{(1)}_t &=\frac{1}{2}\left[ \pare{ \Lambda\xi^{(0)}}^2 - \pare{\partial_1\xi^{(0)}}^2\right]  \nonumber  \\
&\quad- h^{\pare{1}}+\beta\partial_{1}^2h^{\pare{1}}-\alpha_1^{2}\left[\Lambda^2\xi^{(1)}+\partial_1^2 h^{\pare{0}}\Lambda\xi^{\pare{0}} + 2\partial_1 h^{\pare{0}}\partial_1 \Lambda \xi^{\pare{0}}\right]     
+\alpha_2 \Lambda\xi^{(0)}\partial_1^2 h^{\pare{0}}. \label{eq:n1xi1s=2}
\end{align}
These equations are the analog (when $s=2$) of the equations \eqref{eq:n1h1} and \eqref{eq:n1xi1}.

As before, we want to reduce everything to a single equation for $h^{(1)}$ and $h^{(0)}$. Using (\ref{eq:n0s2}d), we find that
\begin{align*}
\Lambda \xi^{\pare{1}}&=h^{\pare{1}}_t-\partial_1 h^{\pare{0}} \cH \left[h^{(0)}_t-\alpha_2\partial_1^2h^{(0)}\right]-\alpha_2\partial_{1}^2 h^{\pare{1}}+\comm{\Lambda}{h^{\pare{0}}}\left[h^{(0)}_t-\alpha_2\partial_1^2h^{(0)}\right].
\end{align*}
As a consequence, we have that
\begin{equation*}
\begin{aligned}
\left.\alpha_1^2 \partial_2^2 \Phi^{\pare{1}}\right|_{x_2=0}& = \alpha_1^2\Lambda\set{ h^{\pare{1}}_t -\partial_1 h^{\pare{0}} \cH\bra{h^{\pare{0}} _t -\alpha_2\partial_{1}^2 h^{\pare{0}}} + \comm{\Lambda}{h^{\pare{0}}} \bra{h^{\pare{0}} _t -\alpha_2\partial_{1}^2 h^{\pare{0}}} - \alpha_2\partial_{1}^2 h^{\pare{1}} }\\
&\quad+\alpha_1^2\left(\partial_1^2 h^{\pare{0}}\bra{h^{\pare{0}} _t -\alpha_2\partial_{1}^2 h^{\pare{0}}}  + 2\partial_1 h^{\pare{0}}\partial_1 \bra{h^{\pare{0}} _t -\alpha_2\partial_{1}^2 h^{\pare{0}}} \right).
\end{aligned}
\end{equation*}

Time differentiating \eqref{eq:n1h1s=2} and inserting \eqref{eq:n1xi1s=2}, we deduce 
\begin{align*}
h^{\pare{1}}_{tt} & =-\partial_1 h^{\pare{0}}_t \partial_1 \xi^{(0)} +   \Lambda \xi^{\pare{1}}_t -\comm{\Lambda}{h^{\pare{0}}_t}\Lambda \xi^{ \pare{0}}+\alpha_2\partial_{1}^2 h^{\pare{1}}_t-\partial_1 h^{\pare{0}} \partial_1 \xi^{(0)}_t -\comm{\Lambda}{h^{\pare{0}}}\Lambda \xi^{ \pare{0}}_t, \\
&=\partial_1 h^{\pare{0}}_t \mathcal{H}\left[h^{(0)}_t-\alpha_2\partial_1^2h^{(0)}\right] -\comm{\Lambda}{h^{\pare{0}}_t}\left[h^{(0)}_t-\alpha_2\partial_1^2h^{(0)}\right]\\
&\quad+\alpha_2\partial_{1}^2 h^{\pare{1}}_t-\partial_1 h^{\pare{0}} \partial_1 \left[- h^{\pare{0}} +\beta\partial_{1}^2h^{\pare{0}} -\alpha_1^{2}\Lambda\left[h^{(0)}_t-\alpha_2\partial_1^2h^{(0)}\right]\right]\\
&\quad -\comm{\Lambda}{h^{\pare{0}}}\Lambda\left[- h^{\pare{0}} +\beta\partial_{1}^2h^{\pare{0}} -\alpha_1^{2}\Lambda\left[h^{(0)}_t-\alpha_2\partial_1^2h^{(0)}\right]\right],\\
&\quad+\frac{1}{2}\Lambda\left[ \pare{ \Lambda\xi^{(0)}}^2 - \pare{\partial_1\xi^{(0)}}^2\right]  \nonumber  \\
&\quad+\Lambda\left[- h^{\pare{1}}+\beta\partial_{1}^2h^{\pare{1}}-\alpha_1^{2}\left[\Lambda^2\xi^{(1)}+\partial_1^2 h^{\pare{0}}\Lambda\xi^{\pare{0}} + 2\partial_1 h^{\pare{0}}\partial_1 \Lambda \xi^{\pare{0}}\right]     
+\alpha_2 \Lambda\xi^{(0)}\partial_1^2 h^{\pare{0}}\right]\\
&=\partial_1 h^{\pare{0}}_t \mathcal{H}\left[h^{(0)}_t-\alpha_2\partial_1^2h^{(0)}\right] -\comm{\Lambda}{h^{\pare{0}}_t}\left[h^{(0)}_t-\alpha_2\partial_1^2h^{(0)}\right]\\
&\quad+\alpha_2\partial_{1}^2 h^{\pare{1}}_t-\partial_1 h^{\pare{0}} \partial_1 \left[- h^{\pare{0}} +\beta\partial_{1}^2h^{\pare{0}} -\alpha_1^{2}\Lambda\left[h^{(0)}_t-\alpha_2\partial_1^2h^{(0)}\right]\right]\\
&\quad -\comm{\Lambda}{h^{\pare{0}}}\Lambda\left[- h^{\pare{0}} +\beta\partial_{1}^2h^{\pare{0}} -\alpha_1^{2}\Lambda\left[h^{(0)}_t-\alpha_2\partial_1^2h^{(0)}\right]\right],\\
&\quad+\frac{1}{2}\Lambda\left[ \pare{h^{(0)}_t-\alpha_2\partial_1^2h^{(0)}}^2 - \pare{\mathcal{H}h^{(0)}_t-\alpha_2\mathcal{H}\partial_1^2h^{(0)}}^2\right]  \nonumber  \\
&\quad+\Lambda\bigg{\{}- h^{\pare{1}}+\beta\partial_{1}^2h^{\pare{1}}-\alpha_1^{2}\left[\partial_1^2 h^{\pare{0}}\left[h^{(0)}_t-\alpha_2\partial_1^2h^{(0)}\right] + 2\partial_1 h^{\pare{0}}\partial_1 \left[h^{(0)}_t-\alpha_2\partial_1^2h^{(0)}\right]\right]     \\
&\quad+\alpha_2 \left[h^{(0)}_t-\alpha_2\partial_1^2h^{(0)}\right]\partial_1^2 h^{\pare{0}}\bigg{\}}\\
&\quad-\alpha_1^{2}\Lambda^2\left\{h^{\pare{1}}_t-\partial_1 h^{\pare{0}} \cH \left[h^{(0)}_t-\alpha_2\partial_1^2h^{(0)}\right]-\alpha_2\partial_{1}^2 h^{\pare{1}}+\comm{\Lambda}{h^{\pare{0}}}\left[h^{(0)}_t-\alpha_2\partial_1^2h^{(0)}\right]\right\},
\end{align*}
where we have used the previous expression for $\Lambda\xi^{(1)}$. We group the different nonlinear contributions according to the coefficient in front: at $\mathcal{O}(1)$ we find \eqref{O(1)}, while at $\mathcal{O}(\beta)$ we have \eqref{O(beta)}. Using Tricomi identity \eqref{tricomi} to obtain
$$
\frac{1}{2}\Lambda\left[ \pare{h^{(0)}_t-\alpha_2\partial_1^2h^{(0)}}^2 - \pare{\mathcal{H}h^{(0)}_t-\alpha_2\mathcal{H}\partial_1^2h^{(0)}}^2\right]=\partial_1\left[ \pare{h^{(0)}_t-\alpha_2\partial_1^2h^{(0)}}\pare{\mathcal{H}h^{(0)}_t-\alpha_2\mathcal{H}\partial_1^2h^{(0)}}\right],
$$
we find that the $\mathcal{O}(\alpha_2)$ contribution is given by \eqref{eq:Oalpha_2v1} and, as a consequence, it can be further simplify to conclude \eqref{eq:Oalpha_2}. At $O(\alpha_2\alpha_2)$ we have the terms \eqref{eq:Oalpha_22}. We collect now the $\mathcal{O}(\alpha_1^2)$ terms:
\begin{multline}\label{eq:Oalpha12s2}
\alpha_1^2\bigg{[}\partial_1 h^{\pare{0}} \partial_1\Lambda h^{\pare{0}} _t-\partial_1^2\left(\partial_1 h^{\pare{0}} \cH h^{\pare{0}} _t\right)  - \comm{\Lambda}{h^{\pare{0}}} \partial_1^2 h^{\pare{0}} _t  + \partial_1^2\comm{\Lambda}{h^{\pare{0}}} h^{\pare{0}} _t\\
-\Lambda\left\{\partial_1^2 h^{\pare{0}}h^{(0)}_t+2\partial_1 h^{\pare{0}}\partial_1 h^{(0)}_t\right\}\bigg{]}=\alpha_1^2\bigg{[}-\partial_1^3 h^{\pare{0}} \cH h^{\pare{0}} _t-2\partial_1^2 h^{\pare{0}} \Lambda h^{\pare{0}} _t\\
 + 2\comm{\Lambda}{\partial_1h^{\pare{0}}} \partial_1 h^{\pare{0}} _t+\comm{\Lambda}{\partial_1^2 h^{\pare{0}}} h^{\pare{0}} _t-\Lambda\left\{\partial_1^2 h^{\pare{0}}h^{(0)}_t+2\partial_1 h^{\pare{0}}\partial_1 h^{(0)}_t\right\}\bigg{]}\\
    = -\alpha_1^2\partial_1\comm{\partial_1^2}{h^{\pare{0}}} \cH h^{\pare{0}} _t.
\end{multline}
Finally, we consider the $\mathcal{O}(\alpha_2\alpha_1^2)$ terms and obtain
\begin{multline}\label{eq:Oalpha_01alpha2s2}
\alpha_1^2\alpha_2\bigg{[}-\partial_1 h^{\pare{0}}\Lambda\partial_{1}^3 h^{\pare{0}}+ \comm{\Lambda}{h^{\pare{0}}}\partial_{1}^4 h^{\pare{0}}-\partial_1^2\comm{\Lambda}{h^{\pare{0}}}\partial_{1}^2 h^{\pare{0}} \\
+\Lambda\left[\left(\partial_1^2 h^{(0)}\right)^2+2\partial_1h^{(0)}\partial_1^3 h^{(0)}\right]+\partial_1^2\left(\partial_1 h^{(0)}\partial_1\Lambda h^{(0)}\right)
\bigg{]}\\
=\alpha_1^2\alpha_2\partial_1\comm{\partial_1^2}{h^{\pare{0}}}\Lambda\partial_{1} h^{\pare{0}}.
\end{multline}
Collecting \eqref{O(1)}, \eqref{O(beta)}, \eqref{eq:Oalpha_2}, \eqref{eq:Oalpha_22}, \eqref{eq:Oalpha12s2} and \eqref{eq:Oalpha_01alpha2s2}, we conclude the following equation for $h^{(1)}$
\begin{multline*}
h^{(1)}_{tt}-(\alpha_1^2+\alpha_2)\partial_{1}^2 h^{\pare{1}}_t+ \Lambda h^{\pare{1}}+\beta\Lambda^3 h^{\pare{1}}+\alpha_1^{2}\alpha_2\partial_{1}^4 h^{\pare{1}}\\
=-\Lambda\left(\left(\cH h^{(0)}_t\right)^2\right)+\partial_1\comm{\cH}{h^{(0)}}\Lambda h^{(0)}+\beta\partial_1\comm{\cH}{h^{(0)}}\Lambda^3 h^{\pare{0}}
\\
+\alpha_2\partial_1\comm{\cH}{\cH h^{\pare{0}} _t}\cH \partial_1 ^2 h^{\pare{0}}+\alpha_2\Lambda\left(\cH h^{\pare{0}} _t\cH \partial_1 ^2 h^{\pare{0}}\right)
+\alpha_1^2\alpha_2\partial_1\comm{\partial_1^2}{h^{\pare{0}}}\Lambda\partial_{1} h^{\pare{0}}\\
-\alpha_1^2\partial_1\comm{\partial_1^2}{h^{\pare{0}}} \cH h^{\pare{0}} _t
-\alpha_2\alpha_2\partial_1\comm{\cH}{\partial_{1}^2 h^{\pare{0}}}\partial_{1}^2 h^{\pare{0}}. 
\end{multline*}
Thus, neglecting errors of order $\mathcal{O}(\varepsilon^2)$, we conclude the following model for the renormalized variable \eqref{renormalized}:
\begin{multline}\label{models2}
f_{tt}-(\alpha_1^2+\alpha_2)\partial_{1}^2 f_t+ \Lambda f+\beta\Lambda^3 f+\alpha_1^{2}\alpha_2\partial_{1}^4 f\\
= \varepsilon\bigg\lbrace-\Lambda\left(\left(\cH f_t\right)^2\right)+\partial_1\comm{\cH}{f}\Lambda f +\beta\partial_1\comm{\cH}{f}\Lambda^3 f
\\
+\alpha_2\partial_1\comm{\cH}{\cH f _t}\cH \partial_1 ^2 f+\alpha_2\Lambda\left(\cH f _t\cH \partial_1 ^2 f \right)
+\alpha_1^2\alpha_2\partial_1\comm{\partial_1^2}{f}\Lambda\partial_{1} f\\
-\alpha_1^2\partial_1\comm{\partial_1^2}{f} \cH f _t
-\alpha_2\alpha_2\partial_1\comm{\cH}{\partial_{1}^2 f}\partial_{1}^2 f \bigg\rbrace. 
\end{multline}
When $\alpha_2=\alpha_1^2$, equation \eqref{models2} is an asymptotic model of the damped water waves system proposed by Dias, Dyachenko, and Zakharov \cite{dias2008theory}. Also, when $\alpha_2=\alpha_1^2=0$, equation \eqref{models2} again recovers the quadratic $h-$model in \cite{CGSW18, matsuno1992nonlinear, matsuno1993two, matsuno1993nonlinear, AkMi2010, AkNi2010}.

\section{Craig-Sulem models for damped water waves} \label{ref:asymdampeds3}
The pioneer work of Craig \& Sulem \cite{craig1993numerical} (see also \cite{milder1991improved,milder1992improved,CraigGS}) lead, among other things, to several asymptotic models obtained by truncating a Taylor series for the Dirichlet-to-Neumann operator present in the Zakharov formulation of the water waves problem \cite{zakharov1968stability}. Probably the most famous model of this type is the Craig-Sulem WW2 (see \cite{ambrose2014ill,CGSW18,liu2019sufficiently}):
\begin{align}
f_t & =-\varepsilon\partial_1 f \partial_1 \zeta +   \Lambda \zeta -\varepsilon\comm{\Lambda}{f}\Lambda \zeta, \label{eq:CSww21} \\
\zeta_t &=\frac{\varepsilon}{2}\left[ \pare{ \Lambda\zeta}^2 - \pare{\partial_1\zeta}^2\right]  - f+\beta\partial_{1}^2f. \label{eq:CSww22}
\end{align}
Using Tricomi identity \eqref{tricomi},
$$
\pare{ \Lambda\zeta}^2 - \pare{\partial_1\zeta}^2=2\cH\left(\partial_1 f\Lambda f\right),
$$
so the previous system can be equivalently written as
\begin{align}
f_t & =-\varepsilon\partial_1 f \partial_1 \zeta +   \Lambda \zeta -\varepsilon\comm{\Lambda}{f}\Lambda \zeta, \label{eq:CSww21v2} \\
\zeta_t &=\varepsilon\cH\left(\partial_1 f\Lambda f\right)  - f+\beta\partial_{1}^2f. \label{eq:CSww22v2}
\end{align}
\subsection{Case $s=0$}
Using \eqref{eq:n1h1} and \eqref{eq:n1xi1} we find that, up to an error $\mathcal{O}(\varepsilon^2)$ the variables 
\begin{equation}\label{newvariables}
f=h^{(0)}+\varepsilon h^{(1)},\;\zeta=\xi^{(0)}+\varepsilon \xi^{(1)},
\end{equation}
solve the system
\begin{align}
f_t & =-\varepsilon\partial_1 f \partial_1 \zeta +   \Lambda \zeta -\varepsilon\comm{\Lambda}{f}\Lambda \zeta+\alpha_2\partial_{1}^2 f, \label{eq:n1h1v2} \\
\zeta_t &=\varepsilon\cH\left(\partial_1 f\Lambda f\right)  - f+\beta\partial_{1}^2f-\alpha_1^{0}\zeta     
+\alpha_2 \varepsilon\Lambda\zeta\partial_1^2 f. \label{eq:n1xi1v2}
\end{align}
\subsection{Case $s=2$}
Using \eqref{eq:n1h1s=2} and \eqref{eq:n1xi1s=2}, we also find the viscous analog (called Craig-Sulem WWV2 \cite{kakleas2010numerical,ambrose2012well}) of the Craig-Sulem WW2 model corresponding for the model of Dias, Dyachenko, and Zakharov \cite{dias2008theory} of water waves with viscosity
\begin{align}
f_t & =-\varepsilon\partial_1 f \partial_1 \zeta +   \Lambda \zeta -\varepsilon\comm{\Lambda}{f}\Lambda \zeta+\alpha_2\partial_{1}^2 f, \label{eq:CSdww1} \\
\zeta_t &=\varepsilon\cH\left(\partial_1 f\Lambda f\right) 
- f+\beta\partial_{1}^2f-\alpha_1^2\left( \Lambda^2\zeta+\varepsilon\partial_1^2 f\Lambda\zeta + 2\varepsilon\partial_1 f\partial_1 \Lambda \zeta\right)
+\alpha_2 \varepsilon \Lambda\zeta\partial_1^2 f. \label{eq:CSdww2}
\end{align}

\section{Study of the models and discussion} In this paper we have obtained a number of new models for damped water waves. Of course, one may ask why viscosity effects are required when studying water waves. Besides the fact that every liquid is viscous, there are a number of scenarios where the viscous damping needs to be taken into account. For instance, damping has been used to study standing surface waves generated in a vertically oscillating container (these waves are called Faraday waves) or the question of stabilization of the Benjamin-Feir stability \cite{wu2006note}.

In particular, we derived two nonlocal wave equations, namely,
\begin{multline}\label{models0B}
f_{tt} +\Lambda  f+\beta \Lambda^3f+\alpha^0_1 f_t-\alpha_1^0 \alpha_2\partial_{1}^2 f-\alpha_2\partial_{1}^2 f_t\\
=\varepsilon\bigg{[}-\Lambda\left(\left(\cH f_t\right)^2\right)+\partial_1\comm{\cH}{f}\Lambda f+\beta\partial_1\comm{\cH}{f}\Lambda^3 f
+\alpha_2\partial_1\comm{\cH}{\cH f_t}\cH \partial_1 ^2 f\\+\alpha_2\Lambda\left(\cH f_t\cH \partial_1 ^2 f\right)-\alpha_2^2\partial_1\comm{\cH}{\partial_{1}^2 f}\partial_{1}^2 f\bigg{]},
\end{multline}
and
\begin{multline}\label{models2B}
f_{tt}-(\alpha_1^2+\alpha_2)\partial_{1}^2 f_t+ \Lambda f+\beta\Lambda^3 f+\alpha_1^{2}\alpha_2\partial_{1}^4 f\\
= \varepsilon\bigg\lbrace-\Lambda\left(\left(\cH f_t\right)^2\right)+\partial_1\comm{\cH}{f}\Lambda f +\beta\partial_1\comm{\cH}{f}\Lambda^3 f
\\
+\alpha_2\partial_1\comm{\cH}{\cH f _t}\cH \partial_1 ^2 f+\alpha_2\Lambda\left(\cH f _t\cH \partial_1 ^2 f \right)
+\alpha_1^2\alpha_2\partial_1\comm{\partial_1^2}{f}\Lambda\partial_{1} f\\
-\alpha_1^2\partial_1\comm{\partial_1^2}{f} \cH f _t
-\alpha_2\alpha_2\partial_1\comm{\cH}{\partial_{1}^2 f}\partial_{1}^2 f \bigg\rbrace. 
\end{multline}
Equation \eqref{models0B} is an asymptotic model of the damped water waves system proposed by Jiang, Ting, Perlin \& Schultz \cite{jiang1996moderate} and Wu, Liu \& Yue \cite{wu2006note}, while equation \eqref{models2B} is an asymptotic model of the water waves with viscosity system proposed by Dias, Dyachenko, and Zakharov \cite{dias2008theory}. 

It is a natural question to ask whether these ideas can be extended to three dimensional waves. Although the extension would not be trivial, these ideas can be applied to three dimensions. This should be addressed in a future work.

Another reasonable is which model is better for which application. In general, it is assumed in the literature that \eqref{eq:all} is more realistic when $s=2$, regardless of whether $\delta_2=0$ or not (see \cite{wu2006note} for instance). That would mean that \eqref{models2B} corresponds to a more realistic description of viscous damping of water waves. 

One of the advantages of having an asymptotic model akin to \eqref{models0B} or \eqref{models2B} is that, as there is no Dirichlet-Neumann operator nor elliptic problem involved, it is easier and cheaper to simulate than the full problem \eqref{eq:all}. However, when $\alpha_1^2,\alpha_2\neq0$, the presence of higher order operators as the bilaplacian may cause numerical difficulties. Thus, although \eqref{models2B} is linked to a more realistic description, its implementation may not be straightforward. A careful numerical study of these models should be addressed elsewhere. Also, this numerical study could help to make the decission of which models is better for which application.

\subsection{Typical values of the dimensionless parameters}
Let's consider a numerical example. The value of the physical parameters is (see \cite{Lannes13}):
$$
G=9.8m/s^2,\;\gamma=72\cdot 10^{-3} kg/s^2,\;\rho=1029 kg/m^3.
$$
We consider a wave of size
$$
H=0.02m,\;\;L=0.6m.
$$
This wave follows the scenario in \cite{jiang1996moderate}. Recalling \eqref{eq:dimensionless_parameters} (where $\delta_2=\nu$), we have that
\begin{align} \label{eq:dimensionless_parameters2}
\varepsilon\approx0.03, && \beta=\frac{72\cdot 10^{-3}}{1029 \cdot 9.8\cdot (0.6)^2}\approx 2\cdot 10^{-5},
\end{align}
According to \cite[Section 4]{jiang1996moderate}, the experimental decay rate in the scenario modelled by \eqref{models0B} is estimated as $0.05s^{-1}$. Also, following \cite{dias2008theory} we have that the right viscosity to be used in these applications is the eddy viscosity value 
$$
\nu=10^{-3}.
$$ 
That means that
\begin{align} \label{eq:dimensionless_parameters3}
\alpha_1^{0}=\frac{0.05}{\sqrt{9.8}\cdot (0.6)^{-\frac{1}{2}}}\approx0.01, && \alpha_1^2=\alpha_2=\frac{10^{-3}}{\sqrt{9.8}(0.6)^{3/2}}\approx 6.8\cdot 10^{-4}.
\end{align}
Then, we see that viscous damping effects are at the same level as $\varepsilon^2$ and are somehow more relevant than surface tension effects.

\subsection{Linear analysis and dispersion relations}
In this section we are going to study the dispersion relation of the models (see also \cite{dutykh2009visco2}). In the case where viscous effects are neglected ($\alpha_1^s=\alpha_2=0$) the model was studied in \cite{CGSW18}. In this case, the dispersion relation is
\begin{align}\label{eq:DR_inviscid}
\omega_I\pare{k} = \sqrt{\av{k}\pare{1+\beta\av{k}^2}}, && \beta \geqslant 0,
\end{align}
which is, of course, the same dispersion relation as for the full water waves problem with infinite depth.

We want to understand now how this dispersion relation is affected by the viscous effects. Keeping only the linear terms in the equations \eqref{models0B} and \eqref{models2B} and inserting the standard plane wave ansatz
$$
f(x,t)=e^{ikx-i\omega t}, 
$$
we obtain the following dispersion relations
\begin{align}
\omega_{\pm}^{\bra{0}}\pare{k} &= \pm \frac{\sqrt{-(|k|^2\alpha_2+\alpha_1^0)^2+4(|k|(1+\beta|k|^2)+|k|^2\alpha_1^0\alpha_2)}}{2}\nonumber\\
&\quad-\frac{i\pare{\alpha_1^0 + \alpha_2\av{k}^2}}{2}, \label{eq:DR_0}\\
\omega_{\pm}^{\bra{2}}\pare{k} &=  \pm \frac{\sqrt{-\pare{\alpha_1^2 + \alpha_2}^2\av{k}^4+4(|k|(1+\beta|k|^2)+\alpha_1^2\alpha_2|k|^4)}}{2}\nonumber\\
&\quad -\frac{i\pare{\alpha_1^2 + \alpha_2}\av{k}^2}{2}, \label{eq:DR_2}
\end{align}
where $\omega_{\pm}^{\bra{0}}$ and $\omega_{\pm}^{\bra{2}}$ correspond to equation \eqref{models0B} and \eqref{models2B}, respectively. These dispersion relations are valid for the whole range of values of the dimensionless parameters. We emphasize that the imaginary parts present in the previous expressions for the dispersion relations imply parabolic behavior or, if $\alpha_2=0$ in \eqref{eq:DR_0}, at least absortion.

Using the previous numerical values \eqref{eq:dimensionless_parameters2} and \eqref{eq:dimensionless_parameters3}, we find that, neglecting terms of order $O(10^{-5})$ for a large range of $k'$s, the dispersion relation \eqref{eq:DR_0} can be approximated by
\begin{align*}
\omega_{\pm}^{\bra{0}}\pare{k} &\approx \pm \frac{\sqrt{-(\alpha_1^0)^2+4|k|}}{2}-\frac{i\pare{\alpha_1^0 + \alpha_2\av{k}^2}}{2}.
\end{align*}
Similarly,
\begin{align*}
\omega_{\pm}^{\bra{2}}\pare{k} &\approx  \pm \sqrt{|k|} -\frac{i\pare{\alpha_1^2 + \alpha_2}\av{k}^2}{2}, 
\end{align*}
From the previous dispersion relations, $\omega_{\pm}^{\bra{s}}$, and the dispersion relation for the inviscid model $\omega_I$, we see that both models \eqref{models0B} and \eqref{models2B} have a parabolic behavior. In fact, the dissipation rate in \eqref{models0B} when $\alpha_2=0$ is independent of the Fourier mode $k$, while, for model \eqref{models2B} the dissipation is purely of parabolic type $O(|k|^2)$ (see \cite{lamb1932hydrodynamics}).

\appendix
\section{The explicit solution of an elliptic problem}
\begin{lemma}\label{lem:solutions_Poisson}
Let us consider the Poisson equation
\begin{equation}
\label{eq:Poisson}
\left\lbrace
\begin{aligned}
& \Delta u \pare{x_1 , x_2} &&= b \pare{x_1 , x_2}, & \pare{x_1, x_2} &\in \mathbb{S}^1\times  \pare{-\infty, 0}, \\
&  u \pare{x_1, 0} &&= g \pare{x_1} , & x_1 &\in \mathbb{S}^1, \\
& \lim_{x_2\rightarrow -\infty}\partial_2 u \pare{x_1, x_2} && =0, & x_1 &\in \mathbb{S}^1,
\end{aligned}
\right. 
\end{equation}
where we  assume that the forcing $ b$ and the boundary data $g$ are smooth and decay sufficiently fast at infinity. Then, the unique solution $ u $ of \eqref{eq:Poisson} is given by
\begin{equation} \label{eq:solution_Poisson}
\begin{aligned}
u \pare{x_1, x_2} =  {-} \frac{1}{\sqrt{2\pi}} \sum_{k=-\infty}^{\infty}& \left\lbrace  \frac{1}{\av{k}} \bra{\frac{1}{2} \int _{ -\infty}^0  \hat{b}\pare{k, y_2} e^{\av{k} y_2 } \textnormal{d} y_2  {-} \av{k} \hat{g}\pare{k} } e^{\av{k} x_2} \right.  \\
& -\frac{1}{2\av{k}} \int _{ -\infty}^0  \hat{b}\pare{k, y_2} e^{\av{k} y_2 } \textnormal{d} y_2
\  e^{- \av{k} x_2}  \\
& \left. 
+ \int_0^{x_2} \frac{\hat{b}\pare{k, y_2}}{2\av{k}} \bra{e^{\av{k}\pare{y_2 - x_2 }} - e^{\av{k}\pare{x_2 - y_2 }}} \textnormal{d} y_2
\right\rbrace e^{ik x_1} , 
\end{aligned}
\end{equation} 
where the operator $ \hat{\cdot} $ denotes the Fourier transform in the variable $ x_1 $. In particular
\begin{align}
\partial_2u \pare{x_1, 0} & =\int _{ -\infty}^0  e^{ y_2 \Lambda}b\pare{x_1, y_2} \textnormal{d} y_2+\Lambda g(x_1),\label{eq:pa2u0}\\
\partial_2^2 u \pare{x_1, 0} & =-\partial_1^2 g(x_1)+b(x_1,0).\label{eq:pa22u0}
\end{align}
\end{lemma}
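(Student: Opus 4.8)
The plan is to diagonalize the problem with a Fourier series in the periodic variable $x_1$, turning the Poisson equation into a one-parameter family of constant-coefficient ODEs in $x_2$, one for each frequency $k$. Writing $u\pare{x_1,x_2}=\sum_k \hat u\pare{k,x_2}e^{ikx_1}$ and expanding $b$ and $g$ likewise, the equation $\Delta u = b$ in \eqref{eq:Poisson} becomes, for each fixed $k$,
\[
\partial_2^2 \hat u\pare{k,x_2}-\av{k}^2\hat u\pare{k,x_2}=\hat b\pare{k,x_2},\qquad x_2\in\pare{-\infty,0},
\]
subject to $\hat u\pare{k,0}=\hat g\pare{k}$ and $\lim_{x_2\to-\infty}\partial_2\hat u\pare{k,x_2}=0$.

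For $k\neq 0$ I would solve this ODE by variation of parameters. The two homogeneous solutions are $u_1\pare{x_2}=e^{\av{k}x_2}$, which decays as $x_2\to-\infty$, and $u_2\pare{x_2}=\sinh\pare{\av{k}x_2}$, which vanishes at $x_2=0$; their Wronskian is the constant $\av{k}$. Assembling the Green's function $G\pare{x_2,y_2}=u_1(x_{<})u_2(x_{>})/\av{k}$ for $\partial_2^2-\av{k}^2$ with the decay condition at $-\infty$ and a homogeneous Dirichlet condition at $0$, and adding the harmonic piece $\hat g\pare{k}e^{\av{k}x_2}$ to match the boundary data, gives
\[
\hat u\pare{k,x_2}=\hat g\pare{k}e^{\av{k}x_2}+\int_{-\infty}^0 G\pare{x_2,y_2}\,\hat b\pare{k,y_2}\,\dd y_2.
\]
Splitting the range of integration at $y_2=x_2$ and $y_2=0$ and inserting the explicit form of $G$ reproduces the three-term bracket of \eqref{eq:solution_Poisson}. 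The mode $k=0$, where the $1/\av{k}$ prefactor is absent, is treated separately: integrating $\partial_2^2\hat u\pare{0,\cdot}=\hat b\pare{0,\cdot}$ twice and imposing $\partial_2\hat u\pare{0,-\infty}=0$ yields a solution consistent with the limit of the general formula. Uniqueness is immediate, since the difference of two solutions has each Fourier mode solving the homogeneous ODE with $\hat u\pare{k,0}=0$ and $\partial_2\hat u\pare{k,-\infty}=0$, forcing it to vanish.

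The two boundary identities are then read off. For \eqref{eq:pa2u0} I differentiate the representation in $x_2$ and evaluate at $x_2=0$: the harmonic piece contributes $\av{k}\hat g\pare{k}$, while differentiating the Green's-function integral (the diagonal boundary terms cancel by continuity of $G$ across $x_2=y_2$) and using $\partial_{x_2}G\pare{x_2,y_2}\big|_{x_2=0}=e^{\av{k}y_2}$ on the branch $x_2>y_2$ gives $\int_{-\infty}^0 e^{\av{k}y_2}\hat b\pare{k,y_2}\,\dd y_2$. Summing over $k$ and recognizing $\av{k}\hat g\pare{k}$ as the symbol of $\Lambda$ and $e^{\av{k}y_2}$ as the symbol of $e^{y_2\Lambda}$ produces \eqref{eq:pa2u0}. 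The identity \eqref{eq:pa22u0} needs no explicit formula at all: restricting $\Delta u=b$ to $\Gamma$ and using that $u\pare{\cdot,0}=g$ forces $\partial_1^2 u\pare{x_1,0}=\partial_1^2 g\pare{x_1}$ (the tangential derivative sees only the trace), so $\partial_2^2 u\pare{x_1,0}=b\pare{x_1,0}-\partial_1^2 g\pare{x_1}$.

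The main technical obstacle is the bookkeeping at the boundary $x_2=0$ when differentiating the Green's-function integral: one must check that $G$ is continuous across the diagonal so that no jump term survives, and that the hypotheses of smoothness and fast decay on $b$ and $g$ justify differentiating under the integral, interchanging $\sum_k$ with integration in $y_2$, and verifying that the constructed $u$ indeed satisfies $\partial_2 u\to 0$ as $x_2\to-\infty$ (the potentially growing $\sinh$ factor is controlled against the decaying $\int_{-\infty}^{x_2}e^{\av{k}y_2}\hat b\,\dd y_2$). Matching the precise constants, including the $1/\sqrt{2\pi}$ normalization in \eqref{eq:solution_Poisson}, with the Fourier convention fixed in \eqref{Hilbert} is routine but requires care.
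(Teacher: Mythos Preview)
Your proposal is correct and follows essentially the same route as the paper: Fourier transform in $x_1$ to reduce to a family of constant-coefficient ODEs in $x_2$, then variation of parameters. The paper writes the general solution as $C_1(k)e^{\av{k}x_2}+C_2(k)e^{-\av{k}x_2}$ plus a particular integral and fixes $C_1,C_2$ from the two boundary conditions, whereas you package the same computation via a Green's function built from $e^{\av{k}x_2}$ and $\sinh(\av{k}x_2)$; these are equivalent formulations of the same calculation.

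One point worth noting: your derivation of \eqref{eq:pa22u0} is cleaner than the paper's. The paper obtains it by explicitly differentiating the representation \eqref{eq:pa2uODE} once more in $x_2$ and evaluating at $x_2=0$, while you observe that it is an immediate consequence of restricting the equation $\Delta u=b$ to $\Gamma$ and using $u(\cdot,0)=g$. This shortcut avoids any manipulation of the integral formula. Conversely, the paper spends some effort (via dominated convergence) verifying that the candidate solution actually satisfies $\partial_2\hat u\to 0$ as $x_2\to-\infty$, which you flag as a technical obstacle but do not carry out; that step is where the decay hypotheses on $b$ are genuinely used.
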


\begin{proof}
Let us apply the Fourier transform to the equation \eqref{eq:Poisson}, this transforms the PDE \eqref{eq:Poisson} in the following series of second-order inhomogeneous costant coefficients ODE's
\begin{equation}
\label{eq:Poisson_Fourier}
\left\lbrace
\begin{aligned}
& -k^2 \hat{u}\pare{k, x_2} + \partial_2^2 \hat{u}\pare{k, x_2} = \hat{b} \pare{k, x_2}, & \pare{k, x_2} & \in \mathbb{Z}\times \pare{-\infty, 0}, \\
&\hat{u} \pare{k, 0} = \hat{g}\pare{k}, & k & \in \mathbb{Z} , \\
& \lim_{x_2\rightarrow -\infty}\partial_2 \hat{u}\pare{k, x_2} =0 , & k & \in \mathbb{Z}. 
\end{aligned}
\right. 
\end{equation}

The generic solution of \eqref{eq:Poisson_Fourier} can be deduced using the variation of parameters method, whence 
\begin{equation}\label{eq:generic_solution_ODE}
\hat{u}\pare{k, x_2} = C_1 \pare{k} e^{\av{k}x_2} + C_2 \pare{k} e^{- \av{k}x_2} - \int_0^{x_2} \frac{\hat{b}\pare{k, y_2}}{2\av{k}} \bra{e^{\av{k}\pare{y_2 - x_2 }} - e^{\av{k}\pare{x_2 - y_2 }}} \textnormal{d} y_2.
\end{equation}

The boundary conditions determine the values of the $ C_i $'s:
\begin{align}\label{eq:Ci}
 C_2 \pare{k} & = - \frac{1}{2\av{k}} \int _{ -\infty}^0 \hat{b}\pare{k, y_2} e^{\av{k} y_2 } \textnormal{d} y_2, 
 &
 C_1\pare{k} & = - C_2\pare{k} + \hat{g}\pare{k} .
\end{align}

We provide now the detailed computations for the sake of clarity. From the generic solution \eqref{eq:generic_solution_ODE} we easily derive that $ C_1 = -C_2 +\hat{g} $ simply setting $ x_2 = 0 $ and solving the resulting equation in $ C_1 $. Next we compute $ \partial_2 u $, which gives
\begin{multline}\label{eq:pa2uODE}
\partial_2 \hat{u}\pare{k, x_2} = \pare{\big.  - C_2\pare{k} + \hat{g}\pare{k}} \av{k} e^{\av{k}x_2} - C_2 \pare{k} \av{k} e^{- \av{k}x_2} \\
 + \frac{1}{2}\int_0^{x_2} {\hat{b}\pare{k, y_2}} \bra{e^{\av{k}\pare{y_2 - x_2 }} + e^{\av{k}\pare{x_2 - y_2 }}} \textnormal{d} y_2 . 
\end{multline} 
Due to the negative weight on the exponential, we deduce that
\begin{equation*}
\begin{aligned}
\lim _{x_2 \to -\infty} \bra{\pare{\big.  - C_2\pare{k} + \hat{g}\pare{k}} \av{k} e^{\av{k}x_2}} & =0 .
\end{aligned}
\end{equation*}
Let us now consider the limit
\begin{equation*}
\lim _{x_2 \to -\infty} \bra{ \frac{1}{2}\int_0^{x_2} {\hat{b}\pare{k, y_2}}  e^{\av{k}\pare{x_2 - y_2 }} \textnormal{d} y_2}  . 
\end{equation*}
We prove now that such limit is equal to zero by dominated convergence. Let us consider the family of functions
\begin{equation*}
\pare{f_{k, x_2}\pare{y_2}}_{x_2\in\bR_-} =  \pare{\big. 1_{\bra{x_2, 0}}\pare{y_2} \hat{b}\pare{k, y_2} e^{\av{k}\pare{x_2 - y_2}}}_{x_2\in\bR_-},
\end{equation*}
Since every element of such family is nonzero only when $ y_2 \in \bra{x_2, 0} $ we know that $ e^{\av{k}\pare{x_2 - y_2}} \leqslant 1 $, hence every $ f_{k, x_2} $ can be pointwise bounded by
\begin{equation*}
f_{k, x_2}\pare{y_2} \leqslant \av{\hat{b}\pare{k, y_2}}, 
\end{equation*}
uniformly in $ x_2 $. Moreover we assumed $ b\in L^2 \pare{\mathbb{S}^1 ; L^1 \pare{\bR_-}} $, hence for every $ k $ we have that $ \hat{b}\pare{k, \cdot} \in L^1 \pare{\bR_-} $ and we can indeed apply the Lebesgue dominated convergence theorem in order to deduce
\begin{equation*}
\lim _{x_2 \to -\infty} \bra{ \frac{1}{2}\int_0^{x_2} {\hat{b}\pare{k, y_2}}  e^{\av{k}\pare{x_2 - y_2 }} \textnormal{d} y_2} = 0 , 
\end{equation*}
for every $ k\in \mathbb{Z} $.
What remains is the following equality
\begin{equation*}
\lim _{x_2 \to -\infty} \bra{
-C_2\pare{k} \av{k}e^{-\av{k}x_2} +  \frac{1}{2}\int_0^{x_2} {\hat{b}\pare{k, y_2}} e^{\av{k}\pare{y_2 - x_2 }}  \textnormal{d} y_2 
} =0, 
\end{equation*}
which in turn gives the required constant
\begin{equation*}
C_2 \pare{k}  = - \frac{1}{2\av{k}} \int _{ -\infty}^0 \hat{b}\pare{k, y_2} e^{\av{k} y_2 } \textnormal{d} y_2. 
\end{equation*}
Setting $ x_2 =0 $ in \eqref{eq:pa2uODE} we find that
\begin{equation}\label{eq:pa2uODE_1}
\partial_2 \hat{u}\pare{k, 0} = -2\av{k}C_2 \pare{k} + \av{k} \hat{g}\pare{k}, 
\end{equation}
which reduces to \eqref{eq:pa2u0}. We now differentiate \eqref{eq:pa2uODE} in $ x_2 $ obtaining
\begin{multline}\label{eq:pa2uODE_2}
\partial_2^2 \hat{u}\pare{k, x_2} = \pare{\big.  - C_2\pare{k} + \hat{g}\pare{k}} \av{k}^2 e^{\av{k}x_2} + C_2 \pare{k} \av{k}^2 e^{- \av{k}x_2} \\
 + \hat{b}\pare{k, x_2} - \frac{\av{k}}{2}\int_0^{x_2} {\hat{b}\pare{k, y_2}} \bra{e^{\av{k}\pare{y_2 - x_2 }} - e^{\av{k}\pare{x_2 - y_2 }}} \textnormal{d} y_2 . 
\end{multline} 
Fixing $ x_2 =0 $ in \eqref{eq:pa2uODE_2} the previous equation simplifies to
\begin{equation*}
\partial_2^2 \hat{u} \pare{k, 0} = \av{k}^2 \hat{g}\pare{k}  + \hat{b}\pare{k, x_2} , 
\end{equation*}
which proves \eqref{eq:pa22u0}. 
\end{proof}

\section*{Acknowledgments}
The research of S.S. is supported by the Basque Government through the BERC 2018-2021 program and by Spanish Ministry of Economy and Competitiveness MINECO through BCAM Severo Ochoa excellence accreditation SEV-2017-0718 and through project MTM2017-82184-R funded by (AEI/FEDER, UE) and acronym "DESFLU". We thank the anonymous referees for their numerous suggestions that have improved the exposition of this article.

\begin{footnotesize}

\end{footnotesize}
\vspace{2cm}

\end{document}